\UseRawInputEncoding
\documentclass[12pt]{article}

\usepackage{dsfont}
\usepackage{amsfonts,amsmath,amsthm}
\usepackage{amssymb}
\usepackage{algorithm}
\usepackage{algpseudocode}
\usepackage{xcolor}
\usepackage{graphicx}
\usepackage{stmaryrd}        
\usepackage{multirow}

\usepackage[paper=a4paper,dvips,top=2cm,left=2cm,right=2cm,
foot=1cm,bottom=4cm]{geometry}

\newcommand{\blue}[1]{\begin{color}{blue}#1\end{color}}

\newcommand{\ve}{{\bf e}}

\newcommand{\vd}{{\bf d}}

\begin{document}
	\Large
	
	\title{Completely Positive Biquadratic Tensors}
	\author{Liqun Qi\footnote{Jiangsu Provincial Scientific Research Center of Applied Mathematics, Nanjing 211189, China.
			Department of Applied Mathematics, The Hong Kong Polytechnic University, Hung Hom, Kowloon, Hong Kong.
			({\tt maqilq@polyu.edu.hk})}
		\and
		Chunfeng Cui\footnote{School of Mathematical Sciences, Beihang University, Beijing  100191, China.
			({\tt chunfengcui@buaa.edu.cn})}
		\and
		Haibin Chen\footnote{School of Management Science, Qufu Normal University, Rizhao, Shandong  276800, China.
			({\tt chenhaibin508@qfnu.edu.cn})}
		\and {and \
			Yi Xu\footnote{School of Mathematics, Southeast University, Nanjing  211189, China. Nanjing Center for Applied Mathematics, Nanjing 211135,  China. Jiangsu Provincial Scientific Research Center of Applied Mathematics, Nanjing 211189, China. ({\tt yi.xu1983@hotmail.com})}
		}
	}

	\date{\today}
	\maketitle
	
	\begin{abstract}
		In this paper, we systemically introduce completely positive biquadratic (CPB) tensors and copositive biquadratic tensors.   We show that all weakly CPB tensors are sum of squares tensors, the CPB tensor cone and the copositive biquadratic tensor cone are dual to each other.    We also show that the outer product of two completely positive matrices is a CPB tensor, and the outer product of two copositive matrices is a copositive biquadratic tensor.   We then study two easily checkable subclasses of CPB tensors, namely positive biquadratic Cauchy tensors and biquadratic Pascal tensors.  We show that a biquadratic Pascal tensor is both strongly CPB and positive definite.

		\medskip

		\medskip

		\textbf{Key words.} Completely positive biquadratic tensors, copositive biquadratic tensors,
		positive biquadratic Cauchy tensors, biquadratic Pascal tensors, strongly completely positive biquadratic tensors.
		
		\medskip
		\textbf{AMS subject classifications.} {11E25, 12D15, 14P10, 15A69, 90C23.
		}
	\end{abstract}

	\renewcommand{\Re}{\mathds{R}}
	\newcommand{\rank}{\mathrm{rank}}
	\newcommand{\X}{\mathcal{X}}
	\newcommand{\A}{\mathcal{A}}
	\newcommand{\I}{\mathcal{I}}
	\newcommand{\B}{\mathcal{B}}
	\newcommand{\PP}{\mathcal{P}}
	\newcommand{\C}{\mathcal{C}}
	\newcommand{\D}{\mathcal{D}}
	\newcommand{\LL}{\mathcal{L}}
	\newcommand{\OO}{\mathcal{O}}
	\newcommand{\e}{\mathbf{e}}
	\newcommand{\0}{\mathbf{0}}
	\newcommand{\1}{\mathbf{1}}
	\newcommand{\dd}{\mathbf{d}}
	\newcommand{\ii}{\mathbf{i}}
	\newcommand{\jj}{\mathbf{j}}
	\newcommand{\kk}{\mathbf{k}}
	\newcommand{\va}{\mathbf{a}}
	\newcommand{\vb}{\mathbf{b}}
	\newcommand{\vc}{\mathbf{c}}
	\newcommand{\vq}{\mathbf{q}}
	\newcommand{\vg}{\mathbf{g}}
	\newcommand{\pr}{\vec{r}}
	\newcommand{\pc}{\vec{c}}
	\newcommand{\ps}{\vec{s}}
	\newcommand{\pt}{\vec{t}}
	\newcommand{\pu}{\vec{u}}
	\newcommand{\pv}{\vec{v}}
	\newcommand{\pn}{\vec{n}}
	\newcommand{\pp}{\vec{p}}
	\newcommand{\pq}{\vec{q}}
	\newcommand{\pl}{\vec{l}}
	\newcommand{\vt}{\rm{vec}}
	\newcommand{\x}{\mathbf{x}}
	\newcommand{\vx}{\mathbf{x}}
	\newcommand{\vy}{\mathbf{y}}
	\newcommand{\vu}{\mathbf{u}}
	\newcommand{\vv}{\mathbf{v}}
	\newcommand{\y}{\mathbf{y}}
	\newcommand{\vz}{\mathbf{z}}
	\newcommand{\T}{\top}
	\newcommand{\R}{\mathcal{R}}
	\newcommand{\Q}{\mathcal{Q}}
	\newcommand{\TT}{\mathcal{T}}
	\newcommand{\Sc}{\mathcal{S}}
	\newcommand{\N}{\mathbb{N}}	
	
	\newtheorem{Thm}{Theorem}[section]
	\newtheorem{Def}[Thm]{Definition}
	\newtheorem{Ass}[Thm]{Assumption}
	\newtheorem{Lem}[Thm]{Lemma}
	\newtheorem{Prop}[Thm]{Proposition}
	\newtheorem{Cor}[Thm]{Corollary}
	\newtheorem{example}[Thm]{Example}
	\newtheorem{remark}[Thm]{Remark}
	
	\section{Introduction}
	
	Completely positive tensors and copositive tensors \cite{LQ16, QL17, QXX14} are extensions of completely positive matrices and copositive matrices \cite{BS03}, and have wide applications in numerical optimization and hypergraph theory \cite{CW18, Ko15, NYZ18, PVZ15, YXH22, ZF18, ZF19, ZFW20}.  On the other hand, biquadratic tensors have wide applications in statistics, mechanics, relative theory, bipartite hypergraph theoy and polynomial theory \cite{Ca73, CB25, CHHS25, Ch75, CQX25, HLW20, LLL19, QC25, QCX25, QDH09, WSL20, Zh23, ZLS24}.
	
	In this paper, we will systematically introduce completely positive biquadratic (CPB) tensors and copositive biquadratic tensors, and study their properties.
	
	In the next section, we define CPB tensors and copositive biquadratic tensors.  We show that the CPB tensor cone and the copositive biquadratic tensor cone are dual to each other.  In particular, we show that all weakly CPB tensors, thus all CPB tensors are sum of squares tensors.
	
	In Section 3, we show that the outer product of two completely positive matrices is a CPB tensor, and the outer product of two copositive matrices is a copositive biquadratic tensor.
	
	The, in Section 4, we study two easily checkable subclasses of CPB tensors, namely positive biquadratic Cauchy tensors and biquadratic Pascal tensors.  We show that a biquadratic Pascal tensor is both strongly CPB and positive definite.
	
	\section{Completely Positive Biquadratic Tensors and Copositive Biquadratic Tensors}
	
	Let $[m] = {\{1, \dots, m\}}$. Denote the $i$th unit vector of $\Re^m$ by $\ve_i^{(m)}$.
	
	An $m \times n$ biquadratic tensor $\A = (a_{ijkl})$ has real entries $a_{ijkl}$ for $i, k \in [m]$ and $j, l \in [n]$.
	Assume that $\A$ is symmetric, i.e.,
	$$a_{ijkl} = a_{kjil} = a_{ilkj}$$
	for $i, k \in [m]$ and $j, l \in [n]$.
	
	Let $\x \in \Re^m$ and $\y \in \Re^n$.  Then
	$$F_\A(\x, \y) \equiv \A\x\y\x\y := \sum_{i, k=1}^m \sum_{j, l=1}^n a_{ijkl}x_iy_jx_ky_l.$$
	If $\A\x\y\x\y \ge 0$ for all $\x \in \Re^m$, $\y \in \Re^n$, then we say that $\A$ is {\bf positive semidefinite (psd)}.  If furthermore, $\A\x\y\x\y > 0$ for all $\x \in \Re^m$, {$\y \in \Re^n$,} satisfying $\|\x \|_2 = \|\y\|_2 = 1$, then we say that $\A$ is {\bf positive definite (pd)}.
	
	We call the homogeneous polynomial $F_\A(\x, \y)$ an $m \times n$ {\bf biquadratic form}.  If $F_\A(\x, \y)$ can be expressed as the sum of squares of {\bf bilinear forms}:
	$$F_\A(\x, \y) = \sum_{{r}=1}^R \left[f_{{r}}(\x, \y)\right]^2,$$
	where $f_{{r}}(\x, \y), {r}\in [R]$ are bilinear forms, then $\A$ is called an {\bf SOS biquadratic tensor}.   The smallest $R$ is called the {\bf SOS rank} of $\A$.     Clearly, if $\A$ is an SOS biquadratic tensor, then $\A$ is psd, but not vice versa in general.  In 1973, Calder\'{o}n \cite{Ca73} proved that an $m \times 2$ psd biquadratic tensor is always an SOS biquadratic tensor.   In 1975, Choi \cite{Ch75} gave an example that a $3 \times 3$ psd biquadratic tensor which is not an SOS biquadratic tensor.    Recently, we proved in \cite{CQX25} that the SOS rank of a $2 \times 2$ psd biquadratic tensor is at most $3$.   
	
	If $\A\x\y\x\y \ge 0$ for all $\x \in \Re^m_+$, $\y \in \Re^n_+$, then $\A$ is called a {\bf copositive {biquadratic} tensor}.
	If $\A\x\y\x\y > 0$ for all $\x \in \Re^m_+$, $\x \not = \0_m$, $\y \in \Re^n_+$, $\y \not = \0_n$, then $\A$ is called a {\bf strictly copositive {biquadratic} tensor}.
	
	All $m \times n$ symmetric biquadratic tensors form a space, {denoted by} $SBQ(m, n)$.  Then all the $m \times n$ symmetric positive semidefinite biquadratic tensors form a closed cone, denoted as $PSDSBQ(m, n)$.   All the $m \times n$ symmetric copositive biquadratic tensors also form a closed cone, denoted as $COPSBQ(m, n)$.   We have
	$$PSDSBQ(m, n) \subsetneq  COPSBQ(m, n) \subsetneq SBQ(m, n).$$
	
	Suppose that $\A = (a_{ijkl}), \B = (b_{ijkl}){\in SBQ(m,n)}$.  Denote
	$$\A \bullet \B : = \sum_{i, k=1}^m \sum_{j, l=1}^n a_{ijkl}b_{ijkl}.$$
	Suppose that $C, D \subsetneq SBQ(m, n)$ are two closed cones.  If $\A \bullet \B \ge 0$ for all $\A \in C$ and $\B \in D$, then we say that $C$ is the {\bf dual cone} of $D$, and vice versa.
	
	Let $\A = (a_{ijkl}) \in SBQ(m, n)$.    If there are vectors $\vu^{(p)} \in \Re^m$, $\vv^{(p)} \in \Re^n$, for $p \in [r]$, such that
	$$a_{ijkl} = \sum_{p=1}^r u_i^{(p)}v_j^{(p)}u_k^{p)}v_l^{(p)},$$	
	for $i, k \in [m]$ and $j, l \in [n]$, then we say that $\A$ is a {\bf weakly completely positive biquadratic tensor}.   If
	furthermore, $\vu^{(p)}$ and $\vv^{(p)}$ are nonnegative, i.e.,  $\vu^{(p)} \in \Re^m_+$, $\vv^{(p)} \in \Re^n_+$, for $p \in [r]$,  then we say that $\A$ is a {\bf completely positive biquadratic tensor}, or simply called a {\bf CPB tensor}.
	The minimum value of $r$ is called the {\bf CPrank} of $\A$.  If further all the involved vectors $\vu^{(k)}$’s and $\vv^{(k)}$’s can span the entire $m$-dimensional and
	$n$-dimensional Euclidean spaces respectively, then $\A$ is said to be a {\bf strongly completely positive biquadratic
		tensor}.
	All the $m \times n$ completely positive biquadratic tensors form a closed cone, denoted as $CPBQ(m, n)$.
	All the $m \times n$ weakly completely positive biquadratic tensors also form a closed cone, denoted as $WCPBQ(m, n)$.
	All the $m \times n$ strongly completely positive biquadratic tensors also form a closed cone, denoted as $SCPBQ(m, n)$.
	
	\begin{Thm}
		A weakly completely positive biquadratic tensor is an SOS biquadratic tensor.    The closed cones $CPBQ(m, n)$ and $COPSBQ(m, n)$ are dual cones to each other.
	\end{Thm}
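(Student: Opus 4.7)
\bigskip

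The plan is to handle the two statements separately, each reducing to a direct algebraic expansion plus one slightly more delicate step.

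For the first statement, I would start from the weakly CPB decomposition $a_{ijkl}=\sum_{p=1}^r u_i^{(p)}v_j^{(p)}u_k^{(p)}v_l^{(p)}$, substitute it into $F_\A(\x,\y)$, and interchange the order of summation. The sum over $(i,j,k,l)$ then factorizes as
\[
F_\A(\x,\y)=\sum_{p=1}^r\Bigl(\sum_i u_i^{(p)}x_i\Bigr)\Bigl(\sum_j v_j^{(p)}y_j\Bigr)\Bigl(\sum_k u_k^{(p)}x_k\Bigr)\Bigl(\sum_l v_l^{(p)}y_l\Bigr)=\sum_{p=1}^r\bigl[f_p(\x,\y)\bigr]^2,
\]
where $f_p(\x,\y):=\bigl(\sum_i u_i^{(p)}x_i\bigr)\bigl(\sum_j v_j^{(p)}y_j\bigr)$ is a bilinear form in $(\x,\y)$. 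This exhibits $\A$ as an SOS biquadratic tensor and, as a byproduct, bounds the SOS rank by the weak CP rank.

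For the duality statement I would first check the easy inclusion: any $\A\in CPBQ(m,n)$ with nonnegative decomposition $\{(\vu^{(p)},\vv^{(p)})\}_{p\in[r]}$ and any $\B\in COPSBQ(m,n)$ satisfy
\[
\A\bullet\B=\sum_{p=1}^r\sum_{i,j,k,l}b_{ijkl}u_i^{(p)}v_j^{(p)}u_k^{(p)}v_l^{(p)}=\sum_{p=1}^r F_\B\bigl(\vu^{(p)},\vv^{(p)}\bigr)\ge 0,
\]
because each $\vu^{(p)}\in\Re^m_+$, $\vv^{(p)}\in\Re^n_+$ and $\B$ is copositive. Under the paper's pairing-based definition of dual cone this already finishes the theorem, but I would also verify the standard polarity statement. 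In one direction, if $\A\bullet\B\ge 0$ for every $\A\in CPBQ(m,n)$, then testing against rank-one CPB tensors $a_{ijkl}=u_iv_ju_kv_l$ with $\vu\in\Re^m_+$, $\vv\in\Re^n_+$ gives $F_\B(\vu,\vv)\ge 0$, so $\B\in COPSBQ(m,n)$.

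The remaining direction, that $\A\bullet\B\ge 0$ for every $\B\in COPSBQ(m,n)$ forces $\A\in CPBQ(m,n)$, is the only nontrivial step and is where I expect the main obstacle. The approach is a standard convex separation argument: $CPBQ(m,n)$ is a closed convex cone in the finite-dimensional space $SBQ(m,n)$ by definition, so if $\A\notin CPBQ(m,n)$, the Hahn--Banach separation theorem produces a $\B\in SBQ(m,n)$ with $\B\bullet\A'\ge 0$ for all $\A'\in CPBQ(m,n)$ but $\B\bullet\A<0$. Applying the previous paragraph to $\B$ yields $\B\in COPSBQ(m,n)$, contradicting the hypothesis on $\A$. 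The subtle point is ensuring that $CPBQ(m,n)$ is indeed closed, which relies on the cone being generated by the compact set of rank-one symmetric products $\vu\vv\vu\vv$ with $\|\vu\|=\|\vv\|=1$, $\vu,\vv\ge\mathbf{0}$, together with a Carath\'eodory-type bound on the number of summands needed in any CP decomposition.
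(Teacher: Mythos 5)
Your proof is correct, and its first half coincides with the paper's argument: the same expansion of $F_\A(\x,\y)$ into $\sum_p \bigl((\vu^{(p)})^\top\x\bigr)^2\bigl((\vv^{(p)})^\top\y\bigr)^2$ for the SOS claim, and the same computation $\A\bullet\B=\sum_p F_\B(\vu^{(p)},\vv^{(p)})\ge 0$ for the pairing. Where you diverge is the duality statement: the paper adopts a weak, pairing-based definition of ``dual cones'' (nonnegativity of $\A\bullet\B$ on $CPBQ\times COPSBQ$), so its proof stops exactly where your ``easy inclusion'' stops. You go further and establish the standard polarity: testing $\B$ against rank-one tensors $\vu\otimes\vv\otimes\vu\otimes\vv$ with $\vu,\vv\ge\0$ shows the dual of $CPBQ(m,n)$ is contained in $COPSBQ(m,n)$, and a Hahn--Banach separation (bipolar) argument shows that any $\A$ nonnegative against all of $COPSBQ(m,n)$ lies in $CPBQ(m,n)$. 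This stronger conclusion is what ``dual cones'' usually means, and your identification of the one genuinely delicate ingredient --- closedness of $CPBQ(m,n)$, which the paper asserts without proof --- is apt; your sketch (cone generated by the compact set of unit-norm nonnegative rank-one tensors, plus a Carath\'eodory bound on the number of summands) is the standard way to supply it, mirroring the completely positive matrix case. In short: your argument subsumes the paper's proof and proves a strictly stronger duality, at the cost of the extra closedness/separation machinery that the paper's weaker definition lets it avoid.
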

	\begin{proof}
		Let $\A = (a_{ijkl}) \in WCPBQ(m, n)$.    Then there are vectors $\vu^{(p)} \in \Re^m$, $\vv^{(p)} \in \Re^n$, for $p \in [r]$, such that
		$$a_{ijkl} = \sum_{p=1}^r u_i^{(p)}v_j^{(p)}u_k^{(p)}v_l^{(p)}.$$
		Let $\x \in \Re^m$ and $\y \in \Re^n$.  Then
		\begin{eqnarray*}
			\A\x\y\x\y & = & \sum_{p=1}^r \sum_{i, k=1}^m \sum_{j, l=1}^n u_i^{(p)}v_j^{(p)}u_k^{(p)}v_l^{(p)}x_iy_jx_ky_l\\
			& = & \sum_{p=1}^r  {\left((\vu^{(p)})^\top\vx\right)^2 \left((\vv^{(p)})^\top\vy\right)^2}.
		\end{eqnarray*}	
		Hence, $\A$ is an SOS biquadratic tensor.
		
		Let $\A \in CPBQ(m, n)$.   Then there are nonnegative vectors $\vu^{(p)} \in \Re^m_+$, $\vv^{(p)} \in \Re^n_+$, for $p \in [r]$, such that
		$$a_{ijkl} = \sum_{p=1}^r u_i^{(p)}v_j^{(p)}u_k^{(p)}v_l^{(p)}.$$
		Let $\B \in COPSBQ(m, n)$.   Then
		$$\A \bullet \B = \sum_{i, k=1}^m \sum_{j, l=1}^n a_{ijkl}b_{ijkl} = \sum_{p=1}^r \sum_{i, k=1}^m \sum_{j, l=1}^n b_{ijkl}u_i^{(p)}v_j^{(p)}u_k^{(p)}v_l^{(p)} \ge 0.$$
		Hence, these two closed cones are dual to each other.
	\end{proof}
	
	In the proof, it indicates that the SOS rank of the weakly completely positive biquadratic tensor $\A$ is not greater than $r$.
	On the other hand, an SOS biquadratic tensor may not be weakly completely positive. For instance, the following SOS biquadratic tensor is not weakly completely positive:
	\[\A\x\y\x\y= (x_1y_2+x_2y_1)^2.\]

	{
	\begin{Thm}\label{Thm:SCPB=CPB+PD}
	Let $\A = (a_{ijkl}) \in CPBQ(m, n)$. Then $\A$ is strongly  completely positive biquadratic tensor if   $\A$ is  positive definite.
	\end{Thm}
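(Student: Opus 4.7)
The plan is to proceed by contradiction, leveraging the sum-of-squares expression derived in the previous theorem. Since $\A \in CPBQ(m,n)$, fix any completely positive decomposition
\[
a_{ijkl} = \sum_{p=1}^r u_i^{(p)} v_j^{(p)} u_k^{(p)} v_l^{(p)}, \qquad \vu^{(p)} \in \Re^m_+,\ \vv^{(p)} \in \Re^n_+,
\]
with $r$ minimal (the CP-rank). I will show that this very decomposition already witnesses strong complete positivity, i.e.\ that $\{\vu^{(p)}\}_{p=1}^r$ spans $\Re^m$ and $\{\vv^{(p)}\}_{p=1}^r$ spans $\Re^n$.

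The key identity, already established in the preceding theorem, is
\[
F_\A(\vx, \vy) = \A\vx\vy\vx\vy = \sum_{p=1}^r \bigl((\vu^{(p)})^\top \vx\bigr)^2 \bigl((\vv^{(p)})^\top \vy\bigr)^2.
\]
Suppose for contradiction that $\mathrm{span}\{\vu^{(1)},\dots,\vu^{(r)}\} \neq \Re^m$. Then the orthogonal complement of this span is nontrivial, so there exists $\vx \in \Re^m$ with $\|\vx\|_2 = 1$ and $(\vu^{(p)})^\top \vx = 0$ for every $p \in [r]$. Picking any $\vy \in \Re^n$ with $\|\vy\|_2 = 1$, the displayed identity gives $F_\A(\vx, \vy) = 0$, which contradicts the positive definiteness of $\A$. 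Hence $\{\vu^{(p)}\}$ must span $\Re^m$. By the symmetric argument applied to the second factor, $\{\vv^{(p)}\}$ must span $\Re^n$. Therefore $\A \in SCPBQ(m,n)$.

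There is essentially no real obstacle here: the sum-of-squares representation forces positive definiteness to be equivalent to the spanning property of \emph{any} CP decomposition, so the work is entirely in unpacking the definitions. The only minor point to be careful about is that the statement of strong complete positivity only requires \emph{existence} of a spanning decomposition, but the argument above in fact shows the stronger fact that \emph{every} CP decomposition of a positive definite $\A$ automatically spans, which makes the implication immediate.
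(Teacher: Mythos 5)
Your proposal is correct and follows essentially the same argument as the paper: take a completely positive decomposition, note that a failure of the spanning property yields a nonzero (unit) vector orthogonal to all $\vu^{(p)}$ (or all $\vv^{(p)}$), which forces $\A\vx\vy\vx\vy = 0$ and contradicts positive definiteness. Your write-up is if anything slightly more explicit than the paper's, since it records the sum-of-squares identity and works with unit vectors to match the definition of positive definiteness exactly.
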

	\begin{proof}
		Since $\A \in \text{CPBQ}(m, n)$, there exist vectors $\vu^{(p)} \in \Re_+^m$, $\vv^{(p)} \in \Re_+^n$, for $p \in [r]$, such that
		$$a_{ijkl} = \sum_{p=1}^r u_i^{(p)}v_j^{(p)}u_k^{(p)}v_l^{(p)}.$$
		Suppose $\A$ is positive definite. Then for any nonzero $\x \in \Re^m$ and $\y \in \Re^n$, we have $\A\x\y\x\y > 0$. If ${\vu^{(p)}}$ does not span $\Re^m$ or ${\vv^{(p)}}$ does not span $\Re^n$, then there exist nonzero $\x$, $\y$ orthogonal to all $\vu^{(p)}$ or $\vv^{(p)}$, respectively, which would imply $\A\x\y\x\y = 0$, a contradiction. Hence, ${\vu^{(p)}}$ must span $\Re^m$ and ${\vv^{(p)}}$ must span $\Re^n$. 	
		This completes the proof.
	\end{proof}
	
It should be noted that a strongly completely positive biquadratic tensor is not necessarily positive definite. For example, consider the case where $m=n$ and
	\[\A=\sum_{p=1}^m \ve_p\otimes \ve_p\otimes\ve_p\otimes\ve_p,\]
	where $\ve_p$ denotes the $p$-th standard basis vector in $\mathbb{R}^m$.
	Taking $\vx=\ve_1$ and $\vy=\ve_2$, we obtain  $\A\vx\vy\vx\vy=0$.
	This is different from even order strongly completely positive  tensors \cite{QL17}.
	}
	{\section{Decomposable CPB Tensors}}

	\begin{Thm} \label{t3.1}
		Let $\A = (a_{ijkl}) \in SBQ(m, n)$.    If there are matrices $B=(b_{ik})\in\Re^{m\times m}$ and $C=(c_{jl})\in\Re^{n\times n}$ such that $\A=B\otimes C$. Namely,
		\[a_{ijkl} = b_{ik}c_{jl}.\]
		Then  $\A$ is {a} completely positive biquadratic tensor if and only if $B$ and $C$ are completely positive matrices.
	\end{Thm}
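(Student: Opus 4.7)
The plan is to prove both implications directly from the definition of CP matrices and the CPB decomposition, with the main work being the "only if" direction.

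For the "if" direction, I would start with CP decompositions $b_{ik}=\sum_{p=1}^{r_1} u_i^{(p)}u_k^{(p)}$ with $\vu^{(p)}\in\Re^m_+$ and $c_{jl}=\sum_{q=1}^{r_2} v_j^{(q)}v_l^{(q)}$ with $\vv^{(q)}\in\Re^n_+$. Multiplying these out and re-indexing the pairs $(p,q)\in[r_1]\times[r_2]$, I get
\[
a_{ijkl}=b_{ik}c_{jl}=\sum_{p=1}^{r_1}\sum_{q=1}^{r_2} u_i^{(p)}v_j^{(q)}u_k^{(p)}v_l^{(q)},
\]
which is exactly a CPB decomposition, establishing $\A\in CPBQ(m,n)$.

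For the "only if" direction, assume $\A\in CPBQ(m,n)$ with decomposition $a_{ijkl}=\sum_{p=1}^r u_i^{(p)}v_j^{(p)}u_k^{(p)}v_l^{(p)}$ where $\vu^{(p)}\in\Re^m_+$, $\vv^{(p)}\in\Re^n_+$. The trivial case $B=0$ or $C=0$ gives $\A=0$ and the corresponding matrix is vacuously CP, so I may assume $B\neq 0$ and $C\neq 0$. Symmetry of $\A$ forces $B$ and $C$ to be symmetric (since $b_{ik}c_{jl}=a_{ijkl}=a_{kjil}=b_{ki}c_{jl}$ combined with $C\neq 0$ gives $b_{ik}=b_{ki}$; similarly for $C$). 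Next, because $\A$ is CPB it is psd, so $(\x^\top B\x)(\y^\top C\y)\geq 0$ for all $\x,\y$. Since $B,C$ are symmetric and nonzero, polarization gives some $\x_0,\y_0$ with $\x_0^\top B\x_0\neq 0$ and $\y_0^\top C\y_0\neq 0$, and by possibly replacing the pair $(B,C)$ by $(-B,-C)$ (which leaves $\A=B\otimes C$ unchanged) I may assume both quadratic forms are nonnegative, i.e.\ $B$ and $C$ are both psd. A psd symmetric matrix with zero diagonal must vanish, so there exists $j_0$ with $c_{j_0j_0}>0$.

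The key extraction step is then to specialize the identity $b_{ik}c_{jl}=\sum_p u_i^{(p)}v_j^{(p)}u_k^{(p)}v_l^{(p)}$ at $j=l=j_0$, yielding
\[
b_{ik}=\sum_{p=1}^r \left(\frac{v_{j_0}^{(p)}}{\sqrt{c_{j_0j_0}}}\right)^{2} u_i^{(p)}u_k^{(p)},
\]
which exhibits $B$ as a sum of rank-one terms $\tilde{\vu}^{(p)}(\tilde{\vu}^{(p)})^\top$ with $\tilde{\vu}^{(p)}\in\Re^m_+$; hence $B$ is completely positive. Symmetrically, fixing $i=k=i_0$ at some index with $b_{i_0i_0}>0$ shows $C$ is completely positive.

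The main obstacle is not any single calculation but the sign/scaling ambiguity in the factorization $\A=B\otimes C$: the decomposition is not unique, so one must justify carefully that one may pass to the "positive branch" where both factors are psd before extracting CP decompositions. Once this normalization is in place, the substitution trick $j=l=j_0$ makes the extraction mechanical, and no delicate convergence or duality argument is needed.
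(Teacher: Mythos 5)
Your proof is correct, and its decisive step is the same as the paper's: specialize the identity $b_{ik}c_{jl}=\sum_p u_i^{(p)}v_j^{(p)}u_k^{(p)}v_l^{(p)}$ at $j=l=j_0$ for an index with $c_{j_0j_0}>0$, divide by $c_{j_0j_0}$, and read off a CP decomposition of $B$ (symmetrically for $C$); the easy direction is the same pairing of the two matrix CP decompositions. Where you genuinely differ is the preliminary normalization. The paper only distinguishes the cases ``some $d_{j_0}=c_{j_0j_0}>0$'' and ``all $d_j=0$'', silently omitting the case where every $c_{jj}\le 0$ with some $c_{jj}<0$; you instead use that a CPB tensor is psd, so $(\x^\top B\x)(\y^\top C\y)\ge 0$ for all $\x,\y$, then polarization plus the simultaneous sign flip $(B,C)\mapsto(-B,-C)$ to reduce to both factors being psd, which guarantees the needed positive diagonal entry. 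This covers the case the paper skips, but be aware of what it proves there: after the flip you show that $-B$ and $-C$ are CP, not the originally given $B$ and $C$, and in fact the literal ``only if'' statement fails on that branch (take $B=-\vu\vu^\top$, $C=-\vv\vv^\top$ with $\vu\ge\0$, $\vv\ge\0$; then $\A=B\otimes C$ is CPB while $B$ is not CP). So your argument really establishes the ``either $B,C$ or $-B,-C$ are CP'' version, matching the formulation of the copositive theorem stated right after this one — a defect of the theorem's wording rather than of your proof. Likewise, both you and the paper treat the degenerate case $B=0$ or $C=0$ (so $\A=0$) loosely: the other factor is then arbitrary and need not be CP, so again only the existential or disjunctive reading of the statement survives.
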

	\begin{proof}
		\noindent
		\textbf{($\Leftarrow$)} Suppose $B$ and $C$ are completely positive. 		
		Then there exist {$\vu_r \in \Re^m_+$ for $r \in [r_B]$, and $\vv_s \in \Re^n_+$ for $s \in [r_C]$} such that
		\[
		B = \sum_{r=1}^{r_B} \vu_r \vu_r^\top, \quad C = \sum_{s=1}^{r_C} \vv_s \vv_s^\top.
		\]
		That is, $b_{ik} = \sum_{r=1}^{r_B} (\vu_r)_i (\vu_r)_k$ and $c_{jl} = \sum_{s=1}^{r_C} (\vv_s)_j (\vv_s)_l$.
		Then
		\[
		a_{ijkl} = b_{ik} c_{jl} = \sum_{r=1}^{r_B} \sum_{s=1}^{r_C} (\vu_r)_i (\vu_r)_k \, (\vv_s)_j (\vv_s)_l.
		\]
		Let {$t = rs$, $R = r_B r_C$}, and define $\vx_t = \vu_r \ge {\0_m}$, $\vy_t = \vv_s \ge {\0_n}$. Then
		\[
		a_{ijkl} = \sum_{t=1}^{R} (\vx_t)_i (\vx_t)_k \, (\vy_t)_j (\vy_t)_l,
		\]
		which is a completely positive biquadratic decomposition of $\mathcal{A}$.
		
		\noindent
		\textbf{($\Rightarrow$)} Suppose $\mathcal{A}$ is completely positive biquadratic. 		
		Then there exist $\vx_r \ge 0$, $\vy_r \ge 0$ such that
		\[
		a_{ijkl} = \sum_{r=1}^R (\vx_r)_i (\vx_r)_k \, (\vy_r)_j (\vy_r)_l.
		\]
		We are given $a_{ijkl} = b_{ik} c_{jl}$.
		Fix $j = l$, and let $d_j = c_{jj}$. Then
		\[
		a_{ij kj} = b_{ik} d_j = \sum_{r=1}^R (\vx_r)_i (\vx_r)_k (\vy_r)_j^2.
		\]
		
		If there exists some $j_0$ with $d_{j_0} > 0$, then
		define the matrix $F^{(j_0)} \in \mathbb{R}^{m \times m}$ by $F^{(j_0)}_{ik} = a_{ij_0 kj_0}$. Then
		\[
		F^{(j_0)} = d_{j_0} B = \sum_{r=1}^R (\vy_r)_{j_0}^2 \, \vx_r \vx_r^\top.
		\]
		Thus,
		\[
		B = \frac{1}{d_{j_0}} \sum_{r=1}^R (\vy_r)_{j_0}^2 \, \vx_r \vx_r^\top.
		\]
		Let $\alpha_r = \frac{(\vy_r)_{j_0}^2}{d_{j_0}} \ge 0$.
		Then $B = \sum_{r=1}^R \alpha_r \, \vx_r \vx_r^\top.$
		For each $r$ with $\alpha_r > 0$, define $\vu_r = \sqrt{\alpha_r} \, \vx_r \ge 0$. Then
		\[
		B = \sum_{r: \alpha_r > 0} \vu_r \vu_r^\top,
		\]
		so $B$ is completely positive.

		If $d_j = 0$ for all $j$, then from $b_{ik} d_j = \sum_{r=1}^R (\vy_r)_j^2 (\vx_r)_i (\vx_r)_k$ we get
		\[
		\sum_{r=1}^R (\vy_r)_j^2 (\vx_r)_i (\vx_r)_k = 0 \quad \forall i,k,j.
		\]
		For a fix $j$, the matrix $M^{(j)}=\sum_{r=1}^R (\vy_r)_j^2\vx_r \vx_r^\top$ is positive semidefinite and equals the zero matrix. Consequently,  for each $r$ and $j$, either $(\vy_r)_j = 0$ or $\vx_r = 0$.
		If for some $r$, $\vx_r \neq 0$, then $(\vy_r)_j = 0$ for all $j$, so $\vy_r = 0$. But then the original decomposition gives $a_{ijkl} = 0$ for all $i,j,k,l$, so $\mathcal{A} = 0$.
		If for any $r$, $\vx_r = 0$, then $\mathcal{A} = 0$.
		For both cases, let $B = 0$ and $C = 0$, then they are trivially completely positive.
		
		Similarly, we can show $C$ is completely positive.
		
		This completes the proof.
	\end{proof}

	{\begin{Thm}
			Let $\A = (a_{ijkl}) \in SBQ(m, n)$.    If there are matrices $B=(b_{ik})\in\Re^{m\times m}$ and $C=(c_{jl})\in\Re^{n\times n}$ such that $\A=B\otimes C$. Namely,
			\[a_{ijkl} = b_{ik}c_{jl}.\]
			Then $\A$ is \blue{a} copositive biquadratic tensor if and only if either $B$ and $C$ are copositive matrices or  $-B$ and $-C$ are copositive matrices.
		\end{Thm}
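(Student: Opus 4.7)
The starting observation is that the biquadratic form factors completely: for any $\x \in \Re^m$, $\y \in \Re^n$,
\[
\A\x\y\x\y = \sum_{i,j,k,l} b_{ik}c_{jl}x_iy_jx_ky_l = (\x^\top B \x)(\y^\top C \y).
\]
So $\A$ is copositive biquadratic iff the product $(\x^\top B \x)(\y^\top C \y) \ge 0$ holds for all $\x \in \Re^m_+$ and $\y \in \Re^n_+$. This reduces everything to a sign-analysis of the two quadratic forms restricted to the nonnegative orthant.

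The $(\Leftarrow)$ direction is immediate: if $B$ and $C$ are both copositive, both factors are nonnegative; if $-B$ and $-C$ are both copositive, both factors are nonpositive, and their product is again nonnegative.

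For $(\Rightarrow)$, I plan to argue as follows. Define $f(\x) = \x^\top B \x$ on $\Re^m_+$ and $g(\y) = \y^\top C \y$ on $\Re^n_+$. First I would dispose of the degenerate case: if $f \equiv 0$ on $\Re^m_+$, then plugging $\x = \ve_i^{(m)} + t\ve_k^{(m)}$ for $t \ge 0$ and matching coefficients in $t$ forces $b_{ii} = b_{ik} = b_{kk} = 0$ for all $i,k$, so $B = 0$; in this case both $B$ and $-B$ are (trivially) copositive and choosing either of $C, -C$ copositive works (one of them always is, by the same argument applied to $g$, or by the main case below). The main case assumes $f \not\equiv 0$ and $g \not\equiv 0$ on the respective orthants. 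I would then show that $f$ cannot change sign: if there existed $\x_+, \x_- \in \Re^m_+$ with $f(\x_+) > 0$ and $f(\x_-) < 0$, then the inequality $f(\x_+)g(\y) \ge 0$ forces $g(\y) \ge 0$ on $\Re^n_+$, while $f(\x_-)g(\y) \ge 0$ forces $g(\y) \le 0$ on $\Re^n_+$, contradicting $g \not\equiv 0$. So $f$ has a constant sign on $\Re^m_+$, meaning $B$ or $-B$ is copositive, and similarly for $C$.

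The final step is to pair up the signs correctly. Pick $\x_0 \in \Re^m_+$ with $f(\x_0) \ne 0$. If $f(\x_0) > 0$, the condition $f(\x_0)g(\y) \ge 0$ forces $g(\y) \ge 0$ for all $\y \in \Re^n_+$, hence $C$ is copositive, and combined with $f \ge 0$ on $\Re^m_+$ we get both $B$ and $C$ copositive. If $f(\x_0) < 0$, the same reasoning gives $g(\y) \le 0$ throughout $\Re^n_+$, so both $-B$ and $-C$ are copositive. The only real subtlety is making sure the degenerate cases $B = 0$ or $C = 0$ are absorbed cleanly into one of the two conclusions — since the zero matrix is both copositive and its own negative copositive, this is painless and can be handled in one or two lines at the start of the argument.
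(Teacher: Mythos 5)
Your core argument is the same as the paper's: factor the form as $\A\x\y\x\y=(\x^\top B\x)(\y^\top C\y)$ and analyze signs on the nonnegative orthants. The paper's proof actually stops after writing this identity and asserting that copositivity of $\A$ is equivalent to the two quadratic forms having the same sign; your sign-pairing argument in the main case ($f\not\equiv 0$ and $g\not\equiv 0$ on the respective orthants) supplies exactly the reasoning the paper leaves implicit, and that part is correct.

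The genuine flaw is in the degenerate case, which you declare ``painless.'' If $f\equiv 0$ on $\Re^m_+$ (so $B=0$, granting symmetry), then $\A=B\otimes C=0$ is copositive no matter what $C$ is, so copositivity of $\A$ imposes no constraint on $C$ at all; your parenthetical claim that one of $C,-C$ is then always copositive (``by the same argument applied to $g$, or by the main case below'') does not follow and is in fact false. Take $B=0$ and $C=\mathrm{diag}(1,-1)$: then $g(\y)=y_1^2-y_2^2$ changes sign on $\Re^2_+$, $\A=0$ is copositive, yet neither ($B$ and $C$ copositive) nor ($-B$ and $-C$ copositive) holds; the symmetric situation $C=0$, $B$ sign-changing is equally bad. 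This is really a defect in the literal statement of the theorem — for $\A=0$ the conclusion can only be rescued by re-choosing the factorization, e.g. $\A=0\otimes 0$ — and the paper's one-line proof ignores it as well, but your writeup asserts something false rather than flagging it. The clean fix is to restrict to $\A\neq 0$ (equivalently $B\neq 0$ and $C\neq 0$, which also forces $f\not\equiv 0$ and $g\not\equiv 0$, so your main case applies verbatim), or to state explicitly that in the degenerate case one replaces the given factors by zero matrices.
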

		\begin{proof}
			For all $\vx \ge 0$, $\vy \ge 0$, it holds that
			\[
			\mathcal{A}\vx\vy\vx\vy = \left(\sum_{i,k} b_{ik} x_i x_k\right)\left(\sum_{j,l} c_{jl} y_j y_l\right) = (\vx^\top B \vx)(\vy^\top C \vy) \ge 0.
			\]
			Therefore, 	$\A$ is copositive biquadratic tensor if and only $\vx^\top B \vx$ and $\vy^\top C \vy$ have the same sign for all $\vx \ge 0$, $\vy \ge 0$. This completes the proof.
		\end{proof}
	}
	
	Based on these two theorems, we have several decomposable completely positive biquadratic tensors.
	
	\subsection{Decomposable Positive Biquadratic Cauchy Tensors}
	
	Let $\vc = (c_1, \cdots, c_m)^\top \in \Re^m$ and $\vd = (d_1, \cdots, d_n)^\top \in \Re^n$, with $c_i > 0$ for $i \in [m]$ and $d_j >0$ for $j \in [n]$.  Suppose that $\A = (a_{ijkl}) \in SBQ(m, n)$ is defined by
	$$a_{ijkl} = {1 \over (c_i+c_k)(d_j+d_l)},$$
	for $i, k \in [m]$ and $j, l \in [n]$.  Suppose that $\A$ is well-defined, i.e., $c_i+c_k \not = 0$ for $i, k \in [m]$, and $d_j +d_l \not = 0$ for $j, l \in [n]$.   Then we say that $\A$ is an $m \times n$  {\bf decomposable positive biquadratic Cauchy tensor} with the {\bf generating vectors} $\vc$ and $\vd$.    Then, by Theorem \ref{t3.1}, a positive biquadratic Cauchy tensor is a completely positive biquadratic tensor.

	\subsection{Decomposable Biquadratic Pascal Tensors}
	
	The tensor $\PP = (p_{ijkl})$ is called an $m \times n$ {\bf decomposable biquadratic Pascal tensor} if
	$$p_{ijkl} = {(i+k-2)!(j+l-2)! \over (i-1)!(j-1)!(k-1)!(l-1)!}, \forall i, k \in [m], j, l \in [n].$$
	By Theorem \ref{t3.1}, a biquadratic Pascal tensor is a completely positive biquadratic tensor.
	
	In general, a completely positive biquadratic tensor is not decomposable.    The sum of two completely positive biquadratic tensors is a completely positive biquadratic tensor.   Then, in general, the sum of a decomposable positive biquadratic Cauchy tensor and a decomposable biquadratic Pascal tensor may not be decomposable.
	
	\section{Two Easily Checkable Subclasses of CPB Tensors}
	
	\subsection{Positive Biquadratic Cauchy Tensors}
	
	Let $\vc = (c_1, \cdots, c_m)^\top \in \Re^m$ and $\vd = (d_1, \cdots, d_n)^\top \in \Re^n$, with $c_i \not = 0$ for $i \in [m]$ and $d_j \not = 0$ for $j \in [n]$.  Suppose that $\A = (a_{ijkl}) \in SBQ(m, n)$ is defined by
	$$a_{ijkl} = {1 \over c_i+c_k + d_j+d_l},$$
	for $i, k \in [m]$ and $j, l \in [n]$.  Suppose that $\A$ is well-defined, i.e., $c_i+c_k+d_j+d_l \not = 0$ for $i, k \in [m]$ and $j, l \in [n]$.   Then we say that $\A$ is an $m \times n$  {\bf biquadratic Cauchy tensor} with the {\bf generating vectors} $\vc$ and $\vd$.
	
	\begin{Thm}
		Suppose that $\A$ is an $m \times n$  {\bf biquadratic Cauchy tensor} with the {\bf generating vectors} $\vc$ and $\vd$.   Then the following three statements are equivalent.
		
		{(i)} $\A$ is a nonzero CPB tensor.
		
		{(ii)} $\A$ is a strictly copositive biquadratic tensor.
		
		{(iii)} $c_i+d_j+c_k+d_l > 0$ for $i, k \in [m]$ and $j, l \in [n]$.
	\end{Thm}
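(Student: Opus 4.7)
The plan is to prove the chain (i) $\Rightarrow$ (iii) $\Rightarrow$ (i) and the equivalence (ii) $\Leftrightarrow$ (iii) separately, anchoring everything at the concrete condition (iii). The directions that only touch (iii) are essentially bookkeeping. For (i) $\Rightarrow$ (iii), any CPB decomposition gives $a_{ijkl}=\sum_p u_i^{(p)}v_j^{(p)}u_k^{(p)}v_l^{(p)}\ge 0$; combined with the well-definedness assumption $c_i+c_k+d_j+d_l\neq 0$, this forces $c_i+c_k+d_j+d_l>0$. For (iii) $\Rightarrow$ (ii), every entry $a_{ijkl}$ is strictly positive, so for nonzero $\vx\ge\0_m$ and $\vy\ge\0_n$ one can pick $i_0,j_0$ with $x_{i_0},y_{j_0}>0$ and then $\A\vx\vy\vx\vy\ge a_{i_0 j_0 i_0 j_0}x_{i_0}^2y_{j_0}^2>0$. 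For (ii) $\Rightarrow$ (iii), evaluating at $\vx=\ve_i^{(m)}$ and $\vy=\ve_j^{(n)}$ gives $a_{ijij}=1/(2(c_i+d_j))>0$, hence $c_i+d_j>0$ for every $i,j$, so that $(c_i+d_l)+(c_k+d_j)>0$, which is (iii).

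The substantive step is (iii) $\Rightarrow$ (i). The plan is to use the Laplace-type integral representation
$$\frac{1}{c_i+c_k+d_j+d_l}=\int_0^\infty e^{-(c_i+c_k+d_j+d_l)t}\,dt=\int_0^\infty e^{-c_it}\,e^{-d_jt}\,e^{-c_kt}\,e^{-d_lt}\,dt,$$
which is valid precisely when (iii) holds. Setting $u_i(t):=e^{-c_it}>0$ and $v_j(t):=e^{-d_jt}>0$, the integrand is pointwise a rank-one CPB tensor in the positive vectors $(u_\cdot(t))$ and $(v_\cdot(t))$. Approximating $\int_0^T$ by a Riemann sum with mesh $\Delta t_p$ and distributing the weight as $(\Delta t_p)^{1/4}$ across the four nonnegative factors yields a genuine CPB tensor; letting $T\to\infty$ and the mesh shrink, these converge entrywise to $\A$. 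Since the cone $CPBQ(m,n)$ is closed, $\A\in CPBQ(m,n)$, and positivity of all entries ensures $\A\neq 0$.

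The main obstacle will be making this limit argument rigorous: one must uniformly control the tail $\int_T^\infty$ simultaneously for every index tuple, which I plan to do via the uniform bound using $c_{\min}:=\min_{i,k,j,l}(c_i+c_k+d_j+d_l)>0$ and $\int_T^\infty e^{-c_{\min}t}\,dt=e^{-c_{\min}T}/c_{\min}\to 0$, and then invoke the closedness of the CPB cone to pass from the Riemann-sum approximations to $\A$ itself. Everything else reduces to evaluating the Cauchy entries at standard basis vectors, which is routine.
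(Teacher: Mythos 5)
Your proposal is correct, and its core step coincides with the paper's: for (iii) $\Rightarrow$ (i) both you and the authors use the Laplace representation $1/(c_i+c_k+d_j+d_l)=\int_0^\infty e^{-c_is}e^{-d_js}e^{-c_ks}e^{-d_ls}\,ds$, view the integrand as a rank-one tensor built from the nonnegative vectors $(e^{-c_is})_i$, $(e^{-d_js})_j$, approximate by Riemann sums (each a genuine CPB tensor after absorbing the weights into the vectors), and invoke closedness of $CPBQ(m,n)$; your explicit uniform tail bound via $c_{\min}>0$ actually tightens a point the paper leaves informal. Where you genuinely diverge is (iii) $\Rightarrow$ (ii): the paper again passes through the integral, writing $\A\x\y\x\y=\int_0^\infty\bigl(\sum_i e^{-c_is}x_i\bigr)^2\bigl(\sum_j e^{-d_js}y_j\bigr)^2ds$ and arguing the analytic integrand is not identically zero, whereas you observe that under (iii) every entry $a_{ijkl}$ is strictly positive, every term of $\A\x\y\x\y$ is nonnegative for $\x,\y\ge 0$, and a single diagonal term $a_{i_0j_0i_0j_0}x_{i_0}^2y_{j_0}^2$ already gives strict positivity; this is more elementary and avoids any analyticity argument, at the cost of being specific to tensors with positive entries rather than illustrating the integral structure. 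Your (i) $\Rightarrow$ (iii) (entries of a CPB tensor are nonnegative, and well-definedness rules out a zero denominator) and (ii) $\Rightarrow$ (iii) (evaluate at $\ve_i^{(m)},\ve_j^{(n)}$) match the paper's in substance, and your implication pattern (i) $\Leftrightarrow$ (iii) $\Leftrightarrow$ (ii) covers the full equivalence just as the paper's cycle does.
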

\begin{proof}
    We prove the cycle of implications: (i) \(\rightarrow\) (iii) \(\rightarrow\) (ii) \(\rightarrow\) (iii) \(\rightarrow\) (1).

    \noindent \textbf{• Proof of (ii) \(\rightarrow\) (iii):}

    Assume \( \A \) is a strictly copositive biquadratic tensor. First, consider standard basis vectors \( \e_i^{(m)} \) and \( \e_j^{(n)} \). Then
    \[
    \A \e_i^{(m)} \e_j^{(n)} \e_i^{(m)} \e_j^{(n)} = \frac{1}{c_i + d_j + c_i + d_j} = \frac{1}{2(c_i + d_j)} > 0,
    \]
    so \( c_i + d_j > 0 \) for all \( i \in [m],  j \in [n] \). Thus, we also have $c_k+d_l > 0$ for all $k \in [m], l \in [n]$.
    Hence, $c_i+d_j+c_k+d_l > 0$ for all $i, k, \in [m]$ and $j, l \in [n]$.
    \hfill 

    \noindent \textbf{• Proof of (iii) \(\rightarrow\) (i):}

    Assume \( c_i + d_j + c_k + d_l > 0 \) for all \( i,k \in [m], j,l \in [n] \). We show \( \A \) is a CPB tensor by an integral decomposition.

    Recall the tensor element:
    \[
    a_{ijkl} = \frac{1}{c_i + c_k + d_j + d_l}.
    \]
    Using the integral representation for \( \alpha > 0 \), \( \frac{1}{\alpha} = \int_0^\infty e^{-\alpha s}  ds \), with \( \alpha = c_i + c_k + d_j + d_l \), we get:
    \[
    a_{ijkl} = \int_0^\infty e^{-(c_i + c_k + d_j + d_l)s}  ds = \int_0^\infty e^{-c_i s} e^{-c_k s} e^{-d_j s} e^{-d_l s}  ds.
    \]

    Define vector functions \( \vu(s) \in \Re^m \), \( \vv(s) \in \Re^n \) by:
    \[
    \vu(s)_i = e^{-c_i s}, \quad \vv(s)_j = e^{-d_j s}.
    \]
    These are nonnegative for \( s \geq 0 \). Consider the rank-one tensor:
    \[
    \B(s) = \vu(s) \otimes \vv(s) \otimes \vu(s) \otimes \vv(s).
    \]
    Its \( (i,j,k,l) \)-component is \( e^{-c_i s} e^{-d_j s} e^{-c_k s} e^{-d_l s} \). Thus,
    \[
    \A = \int_0^\infty \B(s)  ds = \int_0^\infty \left( \vu(s) \otimes \vv(s) \otimes \vu(s) \otimes \vv(s) \right) ds.
    \]

    To connect this to the discrete CPB definition, discretize the integral. For large \( N \), partition \( [0, N] \) with \( s_k = k \Delta s \), \( \Delta s = 1/N \), and define:
    \[
    \A_N = \sum_{k=1}^{N^2} \left( \vu(s_k) \otimes \vv(s_k) \otimes \vu(s_k) \otimes \vv(s_k) \right) \Delta s.
    \]
    This is a Riemann sum for the integral. Each term is a rank-one tensor from nonnegative vectors, scaled by \( \Delta s > 0 \). A positive scalar multiple of a rank-one CPB tensor is still CPB (absorb the scalar into the vectors: \( \sqrt{\Delta s}  \vu(s_k) \), etc.). So \( \A_N \) is CPB.

    As \( N \to \infty \), \( \A_N \to \A \) component-wise (the integrand is absolutely integrable). The set of CPB tensors is a closed convex cone, so the limit \( \A \) is also CPB. \hfill 

    \noindent \textbf{• Proof of (iii) \(\rightarrow\) (ii):}

    Assume \( c_i + d_j + c_k + d_l > 0 \) for all \( i,k \in [m], j,l \in [n] \). We show \( \A \) is strictly copositive.

    Use the integral representation:
    \[
    \A\x\y\x\y = \int_0^\infty \left[\sum_{i,k} e^{-(c_i + c_k)s} x_i x_k\right] \left[\sum_{j,l} e^{-(d_j + d_l)s} y_j y_l\right] ds
    \]
    \[
    = \int_0^\infty \left(\sum_i e^{-c_i s} x_i\right)^2 \left(\sum_j e^{-d_j s} y_j\right)^2 ds.
    \]

    For \( \x \ge 0, \y \ge 0 \) both nonzero, the integrand is nonnegative for all \( s \ge 0 \).
    We show it is positive.

    Since \( \x \neq 0 \), there exists \( i_0 \) with \( x_{i_0} > 0 \). Since \( \y \neq 0 \), there exists \( j_0 \) with \( y_{j_0} > 0 \).
    The functions \( \phi(s) = \sum_i e^{-c_i s} x_i \) and \( \psi(s) = \sum_j e^{-d_j s} y_j \) are analytic in \( s \) and not identically zero (since \( \phi(0) = \sum x_i > 0 \) and \( \psi(0) = \sum y_j > 0 \)).
    Thus \( \phi(s)^2 \psi(s)^2 > 0 \) for almost every \( s \in [0,\infty) \), and the integral is positive.

    Hence \( \A \) is strictly copositive. \hfill 

    \noindent \textbf{• Proof of (i) \(\rightarrow\) (iii):}

    Assume \( \A \) is a nonzero CPB biquadratic Cauchy tensor. Suppose for contradiction that \( c_{i_0} + d_{j_0} \le 0 \) for some \( i_0, j_0 \).
    Take \( \x = \e_{i_0}^{(m)}, \y = \e_{j_0}^{(n)} \). Then
    \[
    \A\x\y\x\y = a_{i_0 j_0 i_0 j_0} = \frac{1}{2(c_{i_0} + d_{j_0})} \le 0 \quad \text{(or undefined if zero)}.
    \]
    But if \( \A \) is CPB, then \( \A\x\y\x\y \ge 0 \) for all \( \x, \y \ge 0 \).
    If \( c_{i_0} + d_{j_0} < 0 \), then the form is negative — contradiction.
    If \( c_{i_0} + d_{j_0} = 0 \), then \( a_{i_0 j_0 i_0 j_0} \) is undefined, contradicting that \( \A \) is well-defined.
    Thus \( c_i + d_j > 0 \) for all \( i,j \). Since \( c_i + d_j > 0 \) and \( c_k + d_l > 0 \) for all \( i,j,k,l \), we have \( c_i + d_j + c_k + d_l > 0 \).
    \hfill 

    The cycle (i) \(\rightarrow\) (iii) \(\rightarrow\) (ii) \(\rightarrow\) (iii) \(\rightarrow\) (i) is complete, proving the theorem.
\end{proof}

	We call a biquadratic Cauchy tensor a {\bf positive biquadratic Cauchy tensor} if it satisfies the condition (3) of the above theorem.

	\subsection{Biquadratic Pascal Tensors}
	
	The tensor $\PP = (p_{ijkl})$ is called an $m \times n$ {\bf biquadratic Pascal tensor} if
	$$p_{ijkl} = {(i+j+k+l-4)! \over (i-1)!(j-1)!(k-1)!(l-1)!}, \forall i, k \in [m], j, l \in [n].$$	
	It was proved in \cite{CQC25} that an even order Pascal tensor is positive definite.   We have a similar result here.
	
	\begin{Thm}\label{thm:pascal-pd-strongly-cpb}
		A biquadratic Pascal tensor is positive definite and strongly CPB.
	\end{Thm}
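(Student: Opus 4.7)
The plan is to mimic the integral-representation argument used for the biquadratic Cauchy tensor, replacing the identity $1/\alpha=\int_0^\infty e^{-\alpha s}\,ds$ with the Gamma-function identity $(n-1)!=\int_0^\infty t^{n-1}e^{-t}\,dt$. Applied with $n-1=i+j+k+l-4$, this gives
$$p_{ijkl}=\frac{(i+j+k+l-4)!}{(i-1)!(j-1)!(k-1)!(l-1)!}=\int_0^\infty \frac{t^{i+j+k+l-4}e^{-t}}{(i-1)!(j-1)!(k-1)!(l-1)!}\,dt.$$
Splitting $t^{i+j+k+l-4}=t^{i-1}t^{j-1}t^{k-1}t^{l-1}$ and $e^{-t}=(e^{-t/4})^{4}$, I would introduce the nonnegative vector-valued functions $\vu(t)\in\Re^{m}$ and $\vv(t)\in\Re^{n}$ defined by
$$u_i(t):=\frac{t^{i-1}e^{-t/4}}{(i-1)!},\qquad v_j(t):=\frac{t^{j-1}e^{-t/4}}{(j-1)!},$$
so that $p_{ijkl}=\int_0^\infty u_i(t)v_j(t)u_k(t)v_l(t)\,dt$, i.e.\ $\PP=\int_0^\infty \vu(t)\otimes\vv(t)\otimes\vu(t)\otimes\vv(t)\,dt$.

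From this representation, the CPB property would follow by the same Riemann-sum/closed-cone argument used in the Cauchy-tensor proof: for each $N$, the Riemann sum $\PP_N$ for the integral over $[0,N]$ is a finite sum of rank-one tensors of the form $(\sqrt{\Delta t}\,\vu(t_k))\otimes(\sqrt{\Delta t}\,\vv(t_k))\otimes(\sqrt{\Delta t}\,\vu(t_k))\otimes(\sqrt{\Delta t}\,\vv(t_k))$ built from nonnegative vectors, hence $\PP_N\in CPBQ(m,n)$; componentwise convergence $\PP_N\to\PP$ is guaranteed by the rapid decay $t^{4\max(m,n)-4}e^{-t}$ of the integrand entries; closedness of $CPBQ(m,n)$ then yields $\PP\in CPBQ(m,n)$.

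For positive definiteness, take any nonzero $\x\in\Re^{m}$ and $\y\in\Re^{n}$; interchanging sum and integral gives
$$\PP\x\y\x\y=\int_0^\infty\Big(\sum_{i=1}^m u_i(t)x_i\Big)^{2}\Big(\sum_{j=1}^n v_j(t)y_j\Big)^{2}dt=\int_0^\infty e^{-t}\phi(t)^2\psi(t)^2\,dt,$$
where $\phi(t)=\sum_{i=1}^m \tfrac{x_i}{(i-1)!}t^{i-1}$ and $\psi(t)=\sum_{j=1}^n \tfrac{y_j}{(j-1)!}t^{j-1}$. Since the coefficients $1/(i-1)!$ and $1/(j-1)!$ never vanish, $\phi$ and $\psi$ are nonzero polynomials whenever $\x,\y$ are nonzero, so each has only finitely many real roots. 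The integrand is therefore nonnegative and strictly positive on a set of positive Lebesgue measure, making the integral strictly positive, so $\PP$ is positive definite. Strong CPB then follows immediately from Theorem~\ref{Thm:SCPB=CPB+PD}.

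I expect no serious obstacle; the one slightly creative step is the symmetric splitting $e^{-t}=\prod_{p=1}^{4}e^{-t/4}$, which puts the four factors $u_i,v_j,u_k,v_l$ on equal footing so that the factorial denominators $(i-1)!(j-1)!(k-1)!(l-1)!$ distribute cleanly across the rank-one decomposition. Once this splitting is in place, the remainder of the argument is a verbatim adaptation of the Cauchy tensor proof.
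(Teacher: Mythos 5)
Your proposal is correct and follows essentially the same route as the paper: the Gamma-function integral representation $p_{ijkl}=\int_0^\infty u_i(t)v_j(t)u_k(t)v_l(t)\,dt$, positive definiteness via the fact that $\phi$ and $\psi$ are nonzero polynomials so the integrand is positive off a finite set, CPB via Riemann sums plus closedness of $CPBQ(m,n)$, and strong CPB by invoking Theorem~\ref{Thm:SCPB=CPB+PD}. The only (harmless) cosmetic difference is that you absorb the weight $e^{-t}$ symmetrically into the vectors as $e^{-t/4}$ factors, whereas the paper keeps it as a positive scalar weight in the integral and in the Riemann-sum terms.
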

	
	\begin{proof}
		Let $\PP = (p_{ijkl})$ be an $m \times n$ biquadratic Pascal tensor.
		
		\textbf{1. Positive definiteness:}
		Recall the integral representation:
		\[
		p_{ijkl} = \int_0^\infty \frac{t^{i-1}}{(i-1)!} \frac{t^{j-1}}{(j-1)!} \frac{t^{k-1}}{(k-1)!} \frac{t^{l-1}}{(l-1)!} e^{-t}  dt.
		\]
		Define vectors $\vu(t) \in \Re_+^m$, $\vv(t) \in \Re_+^n$ by
		\[
		u_i(t) = \frac{t^{i-1}}{(i-1)!}, \quad v_j(t) = \frac{t^{j-1}}{(j-1)!}, \quad i \in [m], \; j \in [n].
		\]
		Then for any $\x \in \Re^m$, $\y \in \Re^n$,
		\[
		\PP\x\y\x\y = \int_0^\infty \left( \sum_{i=1}^m u_i(t) x_i \right)^2 \left( \sum_{j=1}^n v_j(t) y_j \right)^2 e^{-t}  dt.
		\]
		Let
		\[
		\phi(t) = \sum_{i=1}^m \frac{t^{i-1}}{(i-1)!} x_i, \quad \psi(t) = \sum_{j=1}^n \frac{t^{j-1}}{(j-1)!} y_j.
		\]
		These are polynomials in $t$ of degree at most $m-1$ and $n-1$, respectively.
		If $\x \neq 0$, then $\phi(t)$ is not the zero polynomial, since $\{1, t, t^2/2!, \dots, t^{m-1}/(m-1)!\}$ are linearly independent.
		Similarly, if $\y \neq 0$, then $\psi(t)$ is not the zero polynomial.
		Hence $\phi(t)^2 \psi(t)^2$ is nonnegative and not identically zero.
		As $e^{-t} > 0$ for $t \in [0,\infty)$, the integral is positive.
		Therefore, $\PP$ is positive definite.
		
		\textbf{2. Strongly CPB property:}
		We use the integral representation:
		\[
		\PP = \int_0^\infty [\vu(t) \otimes \vv(t) \otimes \vu(t) \otimes \vv(t)] e^{-t}  dt.
		\]
		
		Since $\PP$ is positive definite, it lies in the interior of the CPB cone $CPBQ(m,n)$ with respect to the relative topology of $PSDSBQ(m,n)$.
		
		To show $\PP$ is strongly CPB, we construct an explicit finite CP decomposition with the spanning property. Choose $m$ distinct positive values $t_1, \dots, t_m > 0$ and $n$ distinct positive values $s_1, \dots, s_n > 0$. The vectors $\{\vu(t_1), \dots, \vu(t_m)\}$ are linearly independent in $\Re^m$ because the generalized Vandermonde matrix $\left[\frac{t_p^{i-1}}{(i-1)!}\right]$ is invertible. Similarly, $\{\vv(s_1), \dots, \vv(s_n)\}$ are linearly independent in $\Re^n$.
		
		Now consider the finite-rank approximation:
		\[
		\PP_N = \sum_{p=1}^N \lambda_p [\vu(\tau_p) \otimes \vv(\tau_p) \otimes \vu(\tau_p) \otimes \vv(\tau_p)]
		\]
		where $\tau_1, \dots, \tau_N$ include $\{t_1, \dots, t_m, s_1, \dots, s_n\}$ and $\lambda_p > 0$ are appropriate weights from a Riemann sum. Since $\{\vu(t_1), \dots, \vu(t_m)\} \subset \{\vu(\tau_1), \dots, \vu(\tau_N)\}$ spans $\Re^m$ and $\{\vv(s_1), \dots, \vv(s_n)\} \subset \{\vv(\tau_1), \dots, \vv(\tau_N)\}$ spans $\Re^n$, each $\PP_N$ is strongly CPB.
		
		As $N \to \infty$, we have $\PP_N \to \PP$ componentwise. {The set of CPB tensors is a closed convex cone, so the limit $\PP$ is CPB. Combining this with the fact that $\PP$ is positive definite, Theorem~\ref{Thm:SCPB=CPB+PD} implies that $\PP$ is strongly CPB.}
%
	\end{proof}

	
	
	
	
	{{\bf Acknowledgment}}
	This work was partially supported by Research  Center for Intelligent Operations Research, The Hong Kong Polytechnic University (4-ZZT8),    the National Natural Science Foundation of China (Nos. 12471282 and 12131004), the R\&D project of Pazhou Lab (Huangpu) (Grant no. 2023K0603),  the Fundamental Research Funds for the Central Universities (Grant No. YWF-22-T-204), Jiangsu Provincial Scientific Research Center of Applied Mathematics (Grant No. BK20233002), and Shandong Provincial Natural Science Foundation (Grant No. ZR2024MA003).

	{{\bf Data availability}
		No datasets were generated or analysed during the current study.

		{\bf Conflict of interest} The authors declare no conflict of interest.}

	


\end{document}